\newtheorem{theorem}{Theorem}[section]
\newtheorem{lemma}[theorem]{Lemma}
\newtheorem{prop}[theorem]{Proposition}
\newtheorem{corollary}[theorem]{Corollary}
\theoremstyle{definition}
\newtheorem{defi}[theorem]{Definition}
\theoremstyle{remark}
\newtheorem{remark}[theorem]{Remark}
\numberwithin{equation}{section}
\newcommand{\rd}{{\mathbb R^d}}
\def\E{{\mathbb E}}
\begin{document}

\title{\bf Two-term trace estimates for relativistic stable processes}

\author{Rodrigo Ba\~{n}uelos}
\address{Rodrigo Ba\~{n}uelos, Department of Mathematics
Purdue University
150 North University Street
West Lafayete, Indiana 47907-2067
}
\email{banuelos@math.purdue.edu}
\urladdr{http://www.math.purdue.edu/~banuelos}

\author{Jebessa Mijena}
\address{Jebessa Mijena, 221 Parker Hall, Department of Mathematics and Statistics,
Auburn University, Auburn, Al 36849}
\email{jbm0018@tigermail.auburn.edu}
%\urladdr{http://www.auburn.edu/$\sim$ezn0001}

\author{Erkan Nane}
\address{Erkan Nane, 221 Parker Hall, Department of Mathematics and Statistics,
Auburn University, Auburn, Al 36849}
\email{nane@auburn.edu}
\urladdr{http://www.auburn.edu/$\sim$ezn0001}

\begin{abstract}

We prove trace estimates  for the relativistic $\alpha$-stable process extending the result of Ba\~{n}uelos and Kulczycki (2008) in the stable case.
\end{abstract}

\keywords{Relativistic stable process, trace, asymptotics}

%\textbf{Mathematics Subject Classification (2000):} 60J65, 60K99.
\maketitle

%2000 Mathematics Subject Classification: 05A19

\section{Introduction  and statement of main results}

%\section{preliminaries}
For $m>0,$ an $\rd$-valued process with independent, stationary increments having the following characteristic function
$$
\E e^{i\xi\cdot X_t^{\alpha,m}}=e^{-t\{(m^{2/\alpha}+|\xi|^2)^{\alpha/2}-m\}},\ \ \xi\in \rd,
$$
is called relativistic $\alpha$-stable process with mass $m$. We assume that sample paths of $X_t^{\alpha,m}$ are right continuous and have left-hand limits a.s. If we put $m = 0$ we obtain the symmetric rotation invariant $\alpha-$stable process with the characteristic function $e^{-t|\xi|^\alpha}, \xi \in \mathbb R^d.$ We refer to this process as {isotropic }  $\alpha-$stable L\'evy process. For the rest of the paper we keep $\alpha, m$ and $d\geq 2$ fixed and drop $\alpha, m$ in the notation, when it does not lead to confusion. Hence from now on the relativistic $\alpha$-stable process is denoted by $X_t$ and its counterpart  {isotropic } $\alpha-$ stable  L\'evy process by $\tilde {X_t}$ . We keep this notational convention consistently throughout the paper, e.g., if $p_t(x-y)$ is the transition density of $X_t,$ then $\tilde p_t(x-y)$ is the transition density of $\tilde X_t$.

{In Ryznar \cite{ryznar} Green function estimates of the Sch\"odinger operator with the free Hamiltonian of the form
$$
(-\Delta +m^{2/\alpha})^{\alpha/2}-m,
$$
were investigated, where $m>0$ and $\Delta$ is the Laplace operator acting on $L^2(\rd)$.  Using the estimates in Lemma \ref{ryznar-bound} below and proof in Ba\~{n}uelos and Kulczycki (2008) we provide an extension of  the asymptotics in \cite{ban-kul} to the relativistic $\alpha$ stable processes for any $0<\alpha<2$.}

%\section{subordination}
Brownian motion has characteristic function
$$
\E^0e^{i\xi \cdot B_t}=e^{-t|\xi|^2}, \ \ \xi\in \rd.
$$
Let $\beta =\alpha/2$.
Ryznar showed that $ X_t$  can be represented as a time-changed  Brownian motion.
Let $T_\beta(t), \ t>0$, denote the strictly $\beta$-stable subordinator with the following Laplace transform
\begin{equation}
\E^0e^{-\lambda  T_\beta(t)}=e^{-t\lambda^\beta}, \ \ \lambda>0.
\end{equation}
Let $\theta _\beta(t,u), \ u>0$, denote the density function of $T_\beta(t)$. Then the process $B_{T_\beta(t)}$ is the standard symmetric $\alpha$-stable process.

Ryznar \cite[Lemma 1]{ryznar} showed that we can obtain $X_t=B_{T_{\beta}(t,m)}$, where a subordinator $T_{\beta}(t,m)$ is a positive infinitely divisible process with stationary increments with probability density function
$$
\theta_\beta(t,u,m)=e^{-m^{1/\beta}u+mt}\theta _\beta(t,u), \ \ u>0.
$$

Transition density of $T_{\beta}(t,m)$ is given by $\theta_\beta(t,u-v,m)$. Hence the transition density of $X_t$ is $p(t, x, y) = p(t, x-y)$ given by
\begin{equation}\label{free-density}
p(t,x)=e^{mt}\int_0^\infty \frac{1}{(4\pi u)^{d/2}}e^{\frac{-|x|^2}{4u}} e^{-m^{1/\beta}u}\theta _\beta(t,u)du.
\end{equation}
{Then $$p(t, x, x) = p(t, 0) = e^{mt}\int_0^\infty \frac{1}{(4\pi u)^{d/2}} e^{-m^{1/\beta}u}\theta _\beta(t,u)du.$$} The function $p(t, x)$ is a radially symmetric decreasing and that
\begin{equation}\label{density-est}
p(t,x)\leq p(t,0)\leq e^{mt}\int_0^\infty \frac{1}{(4\pi u)^{d/2}}\theta _\beta(t,u)du=e^{mt}t^{-d/\alpha}\frac{\omega_d\Gamma(d/\alpha)}{(2\pi)^d\alpha},
\end{equation}
where $\omega_d=\frac{2\pi^{d/2}}{\Gamma(d/2)}$ is the surface area of the unit sphere in $\rd$.
%\alert{FInd this $C$}
For an open set $D$ in $\rd$ we define the first exit time from $D$ by $\tau_D=\inf \{ t\geq 0: \ \ X_t\notin D\}$.

%Let $D\subset \rd$ be a domain.

We set
\begin{equation}\label{rd-definition}
r_{D}(t,x,y)=\E^x[p(t-\tau_D, X_{\tau_D}, y); \tau_D<t],
\end{equation}
and
\begin{equation}\label{rd-pd relation} p_D(t,x,y)=p(t,x,y)-r_{D}(t,x,y),\end{equation}
for any $x,y\in \rd$, $t>0$. For a nonnegative Borel function $f$ and $t>0$, let
$$
P_t^Df(x)=\E^x[f(X_t):\ t<\tau_D]=\int_D p_D(t,x,y)f(y)dy,
$$
be the semigroup of the killed process acting on $L^2(D)$, see, Ryznar \cite[Theorem 1]{ryznar}.

Let $D$ be a bounded domain (or of finite volume). Then the operator $P_t^D$ maps $L^2(D)$ into $L^\infty (D)$ for every $t>0$. This follows from \eqref{density-est}, \eqref{rd-definition}, and the general theory of heat semigroups as described in \cite{davies}. It follows that there exists an orthonormal basis of eigenfunctions $\{\varphi_n:\ n=1,\ 2,\ 3,\cdots\}$ for $L^2(D)$ and corresponding eigenvalues $\{\lambda_n:\ n=1,\ 2,\ 3,\cdots\}$ of the generator of the semigroup $P_t^D$ satisfying
$$\lambda_1<\lambda_2\leq \lambda_3\leq \cdots,$$
with $\lambda_n\to\infty$ as $n\to\infty$. By definition, the pair $\{\varphi_n, \lambda_n\}$ satisfies
$$
P_t^D\varphi_n(x)=e^{-\lambda_nt}\varphi_n(x), \ \ x\in D, \ t>0.$$
Under such assumptions we have
\begin{equation}\label{eigen-expansion}
p_D(t,x,y)=\sum_{n=1}^\infty e^{-\lambda_nt}\varphi_n(x)\varphi_n(y).
\end{equation}

In this paper we are interested in the behavior of  the trace of this semigroup
\begin{equation}\label{t-trace}
Z_D(t)=\int_D p_D(t,x,x)dx.
\end{equation}

Because of \eqref{eigen-expansion}  we can write \eqref{t-trace} as
\begin{equation}\label{partition-trace}
Z_D(t)=\sum_{n=1}^\infty e^{-\lambda_nt}\int_D\varphi^2_n(x)dx=\sum_{n=1}^\infty e^{-\lambda_nt}.
\end{equation}

%\alert{literature about Brownian and stable case}

We denote $d$-dimensional volume of $D$ by $|D|$.

Our first result is the Weyl's asymtotic for  the eigenvalues of the relativistic Laplacian
\begin{prop}\label{lim-zero-trace}
\begin{equation}\label{limit-trace}
\lim_{t\to 0}t^{d/\alpha}e^{-mt}Z_D(t)=C_1|D|,
\end{equation}
where $C_1=\frac{\omega_d\Gamma(d/\alpha)}{(2\pi)^d\alpha}.$
\end{prop}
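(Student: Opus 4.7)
The plan is to follow the decomposition used by Ba\~nuelos and Kulczycki in the stable case. By \eqref{rd-pd relation} and the spatial homogeneity $p(t,x,x) = p(t,0)$,
$$Z_D(t) = |D|\, p(t,0) - \int_D r_D(t,x,x)\, dx,$$
so the claim reduces to showing (i) $t^{d/\alpha} e^{-mt} p(t,0) \to C_1$ and (ii) $t^{d/\alpha} e^{-mt} \int_D r_D(t,x,x)\, dx \to 0$ as $t \to 0$.

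For (i) I would use the scaling $\theta_\beta(t,u) = t^{-1/\beta} \theta_\beta(1, t^{-1/\beta} u)$ of the stable subordinator. Since $1/\beta = 2/\alpha$, substituting $u = t^{2/\alpha} v$ in \eqref{free-density} at $x = 0$ gives
$$t^{d/\alpha} e^{-mt} p(t,0) = \int_0^\infty (4\pi v)^{-d/2}\, e^{-m^{1/\beta} t^{2/\alpha} v}\, \theta_\beta(1,v)\, dv,$$
and dominated convergence produces the limit $C_1$; the dominating function $(4\pi v)^{-d/2} \theta_\beta(1,v)$ is integrable with total integral $C_1$ by \eqref{density-est} specialized to $t = 1$, $m = 0$.

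Claim (ii) is the main point and where the Ryznar-type estimates of Lemma \ref{ryznar-bound} enter. I would fix $\varepsilon > 0$ and split $D = D_\varepsilon \cup (D \setminus D_\varepsilon)$ with $D_\varepsilon = \{x \in D : d(x, \partial D) > \varepsilon\}$. On $D \setminus D_\varepsilon$, the crude bound $r_D(t,x,x) \leq p(t,0)$ (from $p_D \geq 0$) gives a limiting contribution at most $C_1 |D \setminus D_\varepsilon|$, which shrinks to $0$ as $\varepsilon \to 0$ since $D$ is open and of finite volume. On $D_\varepsilon$, the fact that $X_{\tau_D} \in D^c$ forces $|X_{\tau_D} - x| \geq d(x, \partial D) > \varepsilon$, so
$$r_D(t,x,x) \leq \sup_{|y| \geq \varepsilon,\ 0 < s < t} p(s, y).$$

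The main obstacle is controlling this off-diagonal supremum sharply enough. The plan is to combine the pointwise inequality $p(s, y) \leq e^{ms} \tilde p(s, y)$ (immediate from \eqref{free-density} since $e^{-m^{1/\beta} u} \leq 1$) with the standard isotropic stable tail bound $\tilde p(s, y) \leq C\, s\, |y|^{-d-\alpha}$ valid whenever $|y|^\alpha \geq s$. For $t \leq \varepsilon^\alpha$ this yields $r_D(t,x,x) \leq C e^{mt} t \varepsilon^{-d-\alpha}$ uniformly on $D_\varepsilon$, so the $D_\varepsilon$ contribution in (ii) is at most $C|D|\, t^{1 + d/\alpha} \varepsilon^{-d-\alpha} \to 0$. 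Taking $\limsup_{t \to 0}$ first and then $\varepsilon \to 0$ finishes the argument.
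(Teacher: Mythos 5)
Your proof is correct and follows the same strategy as the paper's: decompose via $p_D = p - r_D$, prove the on-diagonal limit by subordinator scaling and dominated convergence, and control $\int_D r_D(t,x,x)\,dx$ by splitting $D$ into a thin boundary strip (trivial bound $r_D \le p(t,0)$, measure shrinking) and an interior region (off-diagonal decay of the transition density). The only variations are organizational: the paper uses a single $t$-dependent cutoff $\delta_D(x)\ge t^{1/(2\alpha)}$ so that both pieces vanish in one pass rather than your fixed-$\varepsilon$ limsup-then-$\varepsilon\to 0$ argument, and it bounds the interior term by the relativistic transition-density estimate of Kulczycki--Siudeja \eqref{p-estimate} rather than by the subordination inequality $p\le e^{mt}\tilde p$ combined with the standard stable tail bound, but both routes deliver the same content.
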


Let $N(\lambda)$ be the number of eigenvalues $\{\lambda_j\}$ which do not exceed $\lambda$. It follows from \eqref{limit-trace} and the classical Tauberian theorem (see for example \cite{feller}, p.445 Theorem 2) where $L(t) = C_1 |D|e^{m/t}$ is our slowly varying function at infinity
 that
\begin{equation}\label{weyl1}
\lim_{\lambda\to \infty}\lambda ^{-d/\alpha}e^{-m/\lambda}N(\lambda )=\frac{C_1 |D|}{\Gamma(1+d/\alpha)}.
\end{equation}

This is the analogue for relativistic stable process of the celebrated Weyl's asymptotic formula for the eigenvalues of the Laplacian.

\begin{remark}  The first author presented \eqref{weyl1} at a conference in Vienna at the Scrh\"odingier Institute in 2009 (see \cite{ban1}) and at the 34th conference in stochastic processes and their applications in Osaka in 2010 (see \cite{ban2}). Thus this result has been known to the authors, and perhaps to others, for number of years.
\end{remark}

 Our goal in this paper is to obtain the second term in the asymptotics of $Z_D (t)$ under some additional assumptions on the smoothness of $D$. Our result is inspired by result for trace estimates for stable processes by Ba\~nuelos and  Kulczycki \cite{ban-kul}.

 To state our main result we need the following property of the domain $D$.

 \begin{defi}
  The boundary, $\partial D$, of an open set $D$ in $\mathbb{R}^d$ is said to be $R-$smooth if for each point $x_0 \in \partial D$ there are two open balls $B_1$ and $B_2$ with radii $R$ such that $B_1 \subset D, B_2 \subset \mathbb{R}^d\backslash (D \cup \partial D)$ and $ \partial B_1 \cap \partial B_2 = x_0.$
 \end{defi}

 \begin{theorem}\label{main-theorem}
 Let $D \subset \mathbb{R}^d, d\geq 2,$ be an open bounded set with $R-smooth$ boundary. Let $|D|$ denote the volume ($d-$dimensional Lebesgue measure) of $D$ and $|\partial D|$ denote its surface area ($(d-1)-$dimensional  Lebesgue measure) of its boundary. Suppose $\alpha \in (0, 2).$ Then
 \begin{equation}\label{main-statement} \left|Z_{D}(t)-\frac{C_1 (t)e^{mt}|D|}{t^{d/\alpha}} + C_2(t) |\partial D|\right|\leq \frac{C_3e^{2mt}|D|t^{2/\alpha}}{R^{2} t^{d/\alpha}}, \ t>0,
\end{equation}
  where
 $$C_{1}(t) = \frac{1}{(4\pi)^{d/2}}\int_{0}^{\infty}z^{-d/2}e^{-(mt)^{1/\beta}z}\theta_{\beta}(1,z)dz\rightarrow C_1 = \frac{\omega_d\Gamma(d/\alpha)}{(2\pi)^d\alpha},  \ \ \ \mathrm{as}\  t \rightarrow 0,$$
 $$C_{2}(t) = \int_{0}^{\infty} r_{H}(t, (x_{1}, 0,\cdots, 0), (x_{1}, 0,\cdot \cdot \cdot, 0))dx_1\leq \frac{C_4 e^{2mt}t^{1/\alpha}}{t^{d/\alpha}} ,\hspace{.4cm}t > 0$$
 $$
 C_4=\int_{0}^{\infty} \tilde{r}_{H}(1, (x_{1}, 0,\cdots, 0), (x_{1}, 0,\cdot \cdot \cdot, 0))dx_1,
 $$
 $C_3 = C_{3}(d, \alpha), H = \{(x_1, \cdot \cdot \cdot, x_d)\in {\mathbb{R}^d}: x_1 > 0\}$ and ${r_H}$ is given by \eqref{rd-definition}.

\end{theorem}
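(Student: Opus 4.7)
The plan is to follow the approach of Ba\~nuelos--Kulczycki~\cite{ban-kul}, adapted to the relativistic setting through the subordination representation~\eqref{free-density} and Ryznar's density estimates (Lemma~\ref{ryznar-bound}).

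\textbf{Step 1: leading term via subordination.} Starting from~\eqref{rd-pd relation} and using the fact that $p(t,x,x)=p(t,0)$ is constant in $x$,
\[ Z_D(t) = |D|\,p(t,0) - \int_D r_D(t,x,x)\,dx. \]
Inserting~\eqref{free-density} and substituting $u = t^{1/\beta}z$ (using the scaling $\theta_\beta(t,u)=t^{-1/\beta}\theta_\beta(1,u/t^{1/\beta})$ of the $\beta$-stable subordinator) one obtains
\[ p(t,0) = \frac{e^{mt}}{(4\pi)^{d/2}\,t^{d/\alpha}}\int_0^\infty z^{-d/2}e^{-(mt)^{1/\beta}z}\theta_\beta(1,z)\,dz = \frac{e^{mt}C_1(t)}{t^{d/\alpha}}, \]
which yields the leading term of~\eqref{main-statement} and, by dominated convergence, the stated limit $C_1(t)\to C_1$ as $t\to0$.

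\textbf{Step 2: pointwise half-space sandwich.} The remaining task is to compare $\int_D r_D(t,x,x)\,dx$ with $C_2(t)|\partial D|$. Fix $x\in D$, let $x_0\in\partial D$ be a closest boundary point, and set $\delta:=\delta_D(x)=|x-x_0|$. When $\delta<R$, the $R$-smoothness furnishes balls $B_1\subset D$ and $B_2\subset D^c$ of radius $R$ tangent at $x_0$; let $H_{x_0}$ denote the open half-space bounded by the tangent plane to $\partial D$ at $x_0$ and containing $B_1$. Then $B_1\subset D\subset B_2^c$ and $B_1\subset H_{x_0}\subset B_2^c$, and since $U\mapsto r_U$ is set-monotone decreasing (because $U\mapsto p_U$ is monotone increasing), both $r_D(t,x,x)$ and $r_{H_{x_0}}(t,x,x)$ are sandwiched in the interval $[r_{B_2^c}(t,x,x),\,r_{B_1}(t,x,x)]$, so
\[ |r_D(t,x,x)-r_{H_{x_0}}(t,x,x)|\le r_{B_1}(t,x,x)-r_{B_2^c}(t,x,x). \]
By rotation and translation invariance of $X_t$, $r_{H_{x_0}}(t,x,x) = r_H(t,(\delta,0,\ldots,0),(\delta,0,\ldots,0))$.

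\textbf{Step 3: integration and error accounting.} Decompose $D = D_R \cup (D\setminus D_R)$ with $D_R=\{\delta_D(x)<R\}$. On $D_R$ parametrize $x=x_0-s\nu(x_0)$ for $s\in(0,R)$; the Jacobian of this change of variables relative to $ds\,d\sigma(x_0)$ deviates from $1$ by a term controlled by the principal curvatures of $\partial D$, each bounded by $1/R$ thanks to $R$-smoothness. Integrating $r_H(t,(s,0,\ldots,0),(s,0,\ldots,0))$ against $ds\,d\sigma(x_0)$ over all of $(0,\infty)\times\partial D$ yields precisely $C_2(t)|\partial D|$. Three sources contribute to the error: (i) the pointwise ball-sandwich gap $r_{B_1}-r_{B_2^c}$ of Step~2, (ii) the curvature correction in the Jacobian, and (iii) the tail contributions from $D\setminus D_R$ and from $s>R$. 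Careful bookkeeping—along the template of~\cite{ban-kul}—collects all three into the announced $C_3 e^{2mt}|D|t^{2/\alpha}/(R^2 t^{d/\alpha})$ bound, the exponents $2/\alpha$ and $R^{-2}$ reflecting a second-order Taylor expansion of $\partial D$ against its tangent plane. The auxiliary estimate $C_2(t)\le C_4 e^{2mt}t^{1/\alpha}/t^{d/\alpha}$ follows by using Lemma~\ref{ryznar-bound} to compare $r_H$ with its isotropic analog $\tilde r_H$ and then invoking the $\alpha$-stable scaling $\int_0^\infty \tilde r_H(t,(s,0,\ldots,0),(s,0,\ldots,0))\,ds = C_4 t^{(1-d)/\alpha}$.

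\textbf{Main obstacle.} The technical heart is the quantitative bound $\int_D[r_{B_1}-r_{B_2^c}](t,x,x)\,dx = O\bigl(e^{2mt}|D|t^{2/\alpha}/(R^2 t^{d/\alpha})\bigr)$, which converts the pointwise half-space sandwich of Step~2 into a global $O(R^{-2})$ error. In~\cite{ban-kul} this was handled by delicate estimates for stable exit distributions from balls; here one must reprove the analogous statements for $X_t$, which is where Ryznar's density estimates enter and where the additional factors of $e^{mt}$ and $e^{2mt}$ in~\eqref{main-statement} originate.
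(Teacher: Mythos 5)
Your outline follows the paper's overall route: the leading term comes from the subordination representation and stable scaling, the boundary term comes from a half-space approximation via the inner/outer ball sandwich supplied by $R$-smoothness, and the error is tracked by a coarea/curvature argument (your Jacobian estimate is the content of Van den Berg's lemma on $\bigl||\partial D_q|-|\partial D|\bigr|$, which the paper cites rather than rederives). The one place where your proposal diverges in a way worth flagging is the step you label the ``main obstacle'': how to prove $|r_D(t,x,x)-r_{H(x)}(t,x,x)|\le r_{B_1}(t,x,x)-r_{B_2^c}(t,x,x)\lesssim e^{2mt}t^{1/\alpha}R^{-1}t^{-d/\alpha}\bigl((t^{1/\alpha}/\delta_D(x))^{d+\alpha/2-1}\wedge 1\bigr)$. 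You suggest one must ``reprove the analogous statements for $X_t$''. The paper avoids this entirely: using the generalized Ikeda--Watanabe formula together with $p_F-p_D$ (Propositions \ref{ik-wa generalization} and \ref{pf-pd}) it writes the ball-sandwich gap $r_{B_1}-r_{(\overline{B_2})^c}$ as a nonnegative triple integral of $p_{B_1}\cdot\nu\cdot p_{(\overline{B_2})^c}$, bounds each factor termwise by $e^{mt}\tilde p$ (Lemma \ref{ryznar-bound}) and $c\tilde\nu$, and so obtains $r_{B_1}-r_{(\overline{B_2})^c}\le c\,e^{2mt}\bigl(\tilde r_{B_1}-\tilde r_{(\overline{B_2})^c}\bigr)$ --- after which it simply quotes \cite[Prop.~3.1]{ban-kul}. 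The point is that you cannot bound $r_{B_1}$ and $r_{B_2^c}$ separately and hope to retain the cancellation; the Ikeda--Watanabe representation of the \emph{difference} as a manifestly positive quantity is what lets one pass factorwise to the stable case. This reduction is much cheaper than reproving the stable-exit estimates of \cite{ban-kul} in the relativistic setting. You should also note the paper's decomposition $D=D_{R/2}\cup(D\setminus D_{R/2})$ (the deep-interior piece is handled directly by Lemma \ref{ryznar-bound} plus stable scaling, not via the half-space comparison) and the trivial preliminary disposal of the regime $t^{1/\alpha}>R/2$; both are needed to make the bookkeeping close.
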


 \begin{remark}When $m=0$, $0< \alpha \leq 2$, $C_2(t)=C_4 t^{1/\alpha}/t^{d/\alpha}$. Then  the result  in Theorem \ref{main-theorem} becomes for bounded domains with $R-$smooth boundary
 \begin{equation}
 \left |Z_D(t)-\frac{C_1 |D|}{t^{d/\alpha}}+\frac{C_4 |\partial D|t^{1/\alpha}}{t^{d/\alpha}}\right|\leq \frac{C_7 | D|t^{2/\alpha}}{R^2t^{d/\alpha}},
 \end{equation}
 where  $C_1, C_4$  are as in Theorem \ref{main-theorem}.
 This was established by   Ba\~{n}uelos and  Kulczycki \cite {ban-kul} recently.
\end{remark}

 The asymptotic for the trace of the heat kernel when $\alpha = 2$ (the case of the Laplacian with Dirichlet boundary condition in a domain of $\mathbb{R}^d$), have been extensively studied by many authors. For Brownian motion Van den Berg \cite{van}, proved that under the $R-$ smoothness condition
 %when $\alpha = 2,$
\begin{equation} \left|Z_{D}(t)-(4\pi t)^{-d/2}\left(|D|-\frac{\sqrt{\pi t}}{2}|\partial D|\right)\right|\leq \frac{C_d|D|t^{1-d/2}}{R^2},\ \ \ t>0.
\end{equation}
 For domains with $C^1$ boundaries the result
 \begin{equation}Z_D(t) = (4\pi t)^{-d/2}\left(|D|-\frac{\sqrt{\pi t}}{2}|\partial D|+o(t^{1/2})\right),\ \ \ \mathrm{as}\  t\rightarrow 0,
 \end{equation} was proved by Brossard and Carmona \cite {bro-carm}, for Brownian motion.

 \section{Preliminaries}
 { Let the ball in $\mathbb R^d$ with center at $x$ and radius $r,\{y:|y-x|<r\},$ be denoted by $B(x, r).$ We will use $\delta_D(x)$ to denote the Euclidean distance between $x$ and the boundary, $\partial D$, of $D$. That is, $\delta_D(x)$ = dist$(x, \partial D)$}.  Define
 $$\psi (\theta) = \int_{0}^{\infty} e^{-v}v^{p-1/2}(\theta + v/2)^{p-1/2}dv, \theta \geq 0,$$ where $p = (d+\alpha)/2.$ We put $\mathcal R(\alpha, d) = \mathcal A(-\alpha, d)/\psi(0),$ where $\mathcal A(v, d) = (\Gamma ((d-v)/2))/(\pi^{d/2}2^v|\Gamma(v/2)|). $ Let $\nu (x), \tilde \nu (x)$
 be the densities of the L\'evy measures of the relativistic $\alpha-$stable process and the standard $\alpha-$stable process, respectively. These densities are given by
 \begin{equation}
 \nu(x) = \frac{\mathcal R(\alpha, d)}{|x|^{d+\alpha}}e^{-m^{1/\alpha}|x|}\psi(m^{1/\alpha}|x|),
 \end{equation}
 and
 \begin{equation}
 \tilde{v}(x) = \frac{\mathcal A(-\alpha, d)}{|x|^{d+\alpha}}.
 \end{equation}

 We need the following estimate of the transition probabilities of the process $X_t$ which is given in (\cite{siudeja-kul}, Lemma 2.2): For any $x, y\in \mathbb R^d$ and $t>0$ there exist constants $c_1>0$ and $c_2>0,$
\begin{equation}\label{p-estimate}
 p(t, x, y) \leq c_1e^{mt} \min \bigg\{ \frac{t}{|x-y|^{d+\alpha}}e^{-c_2|x-y|}, t^{-d/\alpha} \bigg\}.
\end{equation}
 We will also use the fact(\cite{bog dan}, Lemma 6) that if $D \subset \mathbb R^d$ is an open bounded set satisfying a uniform outer cone condition, then $P^x(X(\tau_D)\in \partial D) = 0$ for all $x\in D.$ For the open bounded set $D$ we will denoted by $G_D(x, y)$ the Green function for the set D equal to, $$G_D(x, y) = \int_0^\infty p_D(t, x, y)dt,\ \ \ x,y \in \mathbb R^d.$$
For any such $D$ the expectation of the exit time of the processes $X_t$ from $D$ is given by the integral of the Green function over the domain. That is: $$E^x(\tau_D) = \int_D G_D(x,y)dy.$$
\begin{lemma}\label{rd-estimate}
Let $D\subset \mathbb R^d$ be an open set. For any $x, y\in D$ we have

$$r_D(t, x, y)\leq c_1e^{mt}\left(\frac{t}{\delta_D^{d+\alpha}(x)}e^{-c_2\delta_D(x)} \wedge t^{-d/\alpha}\right).$$

\end{lemma}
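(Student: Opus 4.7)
Here is my plan. I will establish two separate upper bounds for $r_D(t,x,y)$ and then combine them via the minimum. The trivial bound $r_D(t,x,y)\le p(t,x,y)$, which is immediate from the nonnegativity of $p_D$ in the decomposition \eqref{rd-pd relation}, combined with the second entry of the minimum in \eqref{p-estimate}, yields at once $r_D(t,x,y)\le c_1 e^{mt}t^{-d/\alpha}$.

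The jump-distance bound $r_D(t,x,y)\le c_1 e^{mt}\,t\,\delta_D(x)^{-(d+\alpha)}e^{-c_2\delta_D(x)}$ requires a little more work. The key observation is that $r_D$ is symmetric in $(x,y)$: one has $p(t,x,y)=p(t,y,x)$ because $X_t$ is a symmetric L\'evy process, and $p_D(t,x,y)=p_D(t,y,x)$ because the killed semigroup $P_t^D$ is self-adjoint on $L^2(D)$; their difference $r_D$ is therefore also symmetric. Exploiting this, I rewrite
\[
r_D(t,x,y)=r_D(t,y,x)=\E^y\bigl[p(t-\tau_D,X_{\tau_D},x);\,\tau_D<t\bigr].
\]
On the event $\{\tau_D<t\}$, the right-continuity of paths together with the openness of $D$ forces $X_{\tau_D}\in D^c$, and since $x\in D$ this gives the crucial inequality $|X_{\tau_D}-x|\ge \delta_D(x)$. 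I would then apply the first entry of the minimum in \eqref{p-estimate} to the integrand and use the monotonicity of the map $r\mapsto r^{-(d+\alpha)}e^{-c_2 r}$ to replace $|X_{\tau_D}-x|$ by $\delta_D(x)$; combining this with $e^{m(t-\tau_D)}\le e^{mt}$ and $\E^y[(t-\tau_D)\mathbf{1}_{\tau_D<t}]\le t$ delivers the claimed bound.

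I do not anticipate a serious obstacle. The only point that merits a moment of care is the symmetry of $r_D$, which I would record as a short preliminary remark, justified by the self-adjointness of $P_t^D$ on $L^2(D)$. Should that route be unavailable in some edge case, the alternative would be to apply the Ikeda--Watanabe formula to express $r_D$ as a double integral of $p_D(s,x,z)\,\nu(w-z)\,p(t-s,w,y)$ over $z\in D$ and $w\in D^c$, and then exploit the exponential decay of $\nu$ recorded just above \eqref{p-estimate} together with $|w-z|\ge\delta_D(z)$; but this would be noticeably messier and requires localizing near $x$ via $B(x,\delta_D(x)/2)$, so the symmetry approach is preferable.
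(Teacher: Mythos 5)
Your proof is correct and follows essentially the same route as the paper: invoke the symmetry $r_D(t,x,y)=r_D(t,y,x)$ (which the paper uses silently when it rewrites $r_D(t,x,y)$ as an $\E^y$-expectation), apply the transition-density bound \eqref{p-estimate} inside the definition \eqref{rd-definition}, and use that $X_{\tau_D}\in D^c$ forces $|X_{\tau_D}-x|\ge\delta_D(x)$. Your decision to establish the two entries of the minimum separately is in fact slightly tighter than the paper's one-line computation, which replaces $(t-\tau_D)^{-d/\alpha}$ by $t^{-d/\alpha}$ inside the min — a substitution that goes the wrong way on its own and is only legitimate because, as you note, the second bound follows directly from $r_D\le p\le c_1e^{mt}t^{-d/\alpha}$.
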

\begin{proof}
Using \eqref{rd-definition} and \eqref{p-estimate} we have
\begin{eqnarray}
r_D(t, x, y) &=& E^y(p(t-\tau_D, X(\tau_D),x); \tau_D < t) \nonumber\\
&\leq& c_1e^{mt}E^y\left(\frac{t}{|x-X(\tau_D)|^{d+\alpha}}e^{-c_2|x-X(\tau_D)|}\wedge t^{-d/\alpha}\right)\nonumber\\&\leq& c_1e^{mt}\left(\frac{t}{\delta_D^{d+\alpha}(x)}e^{-c_2\delta_D(x)}\wedge t^{-d/\alpha}\right)\nonumber.
\end{eqnarray}
\end{proof}

We need the following result for the proof of Proposition \ref{lim-zero-trace}.
\begin{lemma}
\begin{equation}\label{free-lim-zero}
\lim_{t\to 0} p(t,0)e^{-mt}t^{d/\alpha}=C_1,
\end{equation}
where
$$
C_1=(4\pi)^{d/2}\int_{0}^\infty u^{-d/2}\theta_\beta(1,u)du=\frac{\omega_d\Gamma(d/\alpha)}{(2\pi)^d\alpha}.
$$
\end{lemma}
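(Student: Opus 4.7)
The plan is to reduce the limit to the value $\tilde p(1,0)$ of the isotropic $\alpha$-stable density at time one, evaluated at the origin. Starting from \eqref{free-density} with $x=0$ we have
$$p(t,0)e^{-mt}t^{d/\alpha}=t^{d/\alpha}\int_0^\infty\frac{1}{(4\pi u)^{d/2}}e^{-m^{1/\beta}u}\theta_\beta(t,u)\,du,$$
so the task is to take the limit inside this integral.

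The next step is to invoke the self-similarity of the $\beta$-stable subordinator, $T_\beta(t)\stackrel{d}{=}t^{1/\beta}T_\beta(1)$, which at the level of densities reads $\theta_\beta(t,u)=t^{-1/\beta}\theta_\beta(1,ut^{-1/\beta})$. I would then change variables $v=ut^{-1/\beta}$, so $du=t^{1/\beta}dv$, and use $\beta=\alpha/2$ to observe that $t^{d/\alpha}\cdot t^{-d/(2\beta)}=1$. The dependence on $t$ then survives only inside the exponential, producing
$$p(t,0)e^{-mt}t^{d/\alpha}=\frac{1}{(4\pi)^{d/2}}\int_0^\infty v^{-d/2}e^{-m^{1/\beta}t^{1/\beta}v}\theta_\beta(1,v)\,dv.$$

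The integrand is monotone in $t$ and dominated by $v^{-d/2}\theta_\beta(1,v)$, which is integrable because in the $m=0$ case the same subordinated representation gives $\tilde p(1,0)=\int_0^\infty(4\pi u)^{-d/2}\theta_\beta(1,u)\,du$, and $\tilde p(1,0)<\infty$ follows from the Fourier representation $\tilde p(1,0)=(2\pi)^{-d}\int_{\rd}e^{-|\xi|^\alpha}\,d\xi$. Dominated convergence with $e^{-m^{1/\beta}t^{1/\beta}v}\to 1$ then yields
$$\lim_{t\to 0}p(t,0)e^{-mt}t^{d/\alpha}=\frac{1}{(4\pi)^{d/2}}\int_0^\infty v^{-d/2}\theta_\beta(1,v)\,dv=\tilde p(1,0).$$

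To identify this constant with $\omega_d\Gamma(d/\alpha)/((2\pi)^d\alpha)$, I would finish by computing $\tilde p(1,0)$ via Fourier inversion and polar coordinates:
$$\tilde p(1,0)=\frac{1}{(2\pi)^d}\int_{\rd}e^{-|\xi|^\alpha}\,d\xi=\frac{\omega_d}{(2\pi)^d}\int_0^\infty e^{-r^\alpha}r^{d-1}\,dr=\frac{\omega_d\Gamma(d/\alpha)}{(2\pi)^d\alpha},$$
the last equality following from the substitution $s=r^\alpha$. The only nontrivial step is justifying dominated convergence, but this is automatic once one recognizes the dominating integrand as essentially $\tilde p(1,0)$, so no genuine obstacle arises.
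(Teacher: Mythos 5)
Your proof is correct and follows essentially the same route as the paper's: scaling of the $\beta$-stable subordinator density, change of variables so that the $t$-dependence survives only in the exponential factor $e^{-m^{1/\beta}t^{1/\beta}v}$, dominated convergence, and identification of the limiting integral as $\tilde p(1,0)$. The only difference is that you spell out the integrability of the dominating function and explicitly carry out the Fourier/polar-coordinate computation of $\tilde p(1,0)$, whereas the paper cites \cite{ban-kul} for the numerical value of that integral.
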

\begin{proof}
By \eqref{free-density} we have
$$
p(t,x, x)=p(t,0)=e^{mt}\int_0^\infty \frac{1}{(4\pi u)^{d/2}} e^{-m^{1/\beta}u}\theta _\beta(t,u)du.
$$

Now using the scaling of stable subordinator  $\theta _\beta(t,u)=  t^{-1/\beta}\theta _\beta(1,ut^{-1/\beta})$ and a change of variables we get
$$
p(t,0)=\frac{e^{mt}}{(4\pi)^{d/2}t^{d/\alpha}}\int_0^\infty z^{-d/2}e^{-m^{1/\beta}t^{1/\beta}z}\theta _\beta(1,z)dz = \frac{C_1(t)e^{mt}}{t^{d/\alpha}},
$$
then by dominated convergence theorem we obtain
$$
\lim_{t\to 0} p(t,0)e^{-mt}t^{d/\alpha}=\frac{1}{(4\pi)^{d/2}}\int_0^\infty z^{-d/2}\theta _\beta(1,z)dz,
$$
and this last integral is equal to the density of  $\alpha$-stable process at time $1$ and $x=0$ which was calculated in \cite{ban-kul} to be
$$
\frac{\omega_d\Gamma(d/\alpha)}{(2\pi)^d\alpha}.
$$
\end{proof}

%\begin{prop}\label{lim-zero-trace}
%\begin{equation}\label{limit-trace}
%\lim_{t\to 0}t^{d/\alpha}e^{-mt}Z_D(t)=C_1|D|
%\end{equation}
%where $C_1=\frac{\omega_d\Gamma(d/\alpha)}{(2\pi)^d\alpha}$
%\end{prop}
We next give the proof of Proposition \ref{lim-zero-trace}.

\begin{proof}[\bf Proof of Proposition \ref{lim-zero-trace}]
By  \eqref{rd-definition} we see that
\begin{equation}
\frac{p_D(t,x,x)}{C_1e^{mt}t^{-d/\alpha}}=\frac{p(t,0)}{C_1e^{mt}t^{-d/\alpha}}-
\frac{r_D(t,x,x)}{C_1e^{mt}t^{-d/\alpha}}.
\end{equation}
Since the first term tend to $1$ as $t\to 0$ by \eqref{free-lim-zero}, in order to prove \eqref{limit-trace}, we show that
\begin{equation}
\frac{t^{d/\alpha}}{C_1e^{mt}}\int_D r_D(t,x,x)dx
\to 0, \ \ \mathrm{as} \ t\to 0.
\end{equation}
{ For $q\geq 0$, we define $D_q = \{x\in D: \delta_{D}(x) \geq q\}$. Then for $0 < t < 1,$ consider the subdomain $D_{t^{1/2\alpha}}=\{x\in D: \  \delta_D(x)\geq t^{1/2\alpha}\}$ and its complement $D_{t^{1/2\alpha}}^C=\{x\in D: \  \delta_D(x)< t^{1/2\alpha}\}$.  Recalling that $|D|<\infty$, by Lebesgue dominated convergence theorem we get $|D_{t^{1/2\alpha}}^C|\to 0,$ as $t\to 0$.} Since $p_D(t,x,x)\leq p(t,x,x)$, by \eqref{density-est} we see that
 $$
 \frac{r_D(t,x,x)}{C_1e^{mt}t^{-d/\alpha}}\leq 1,
 $$
 for all $x\in D$. It follows that
 \begin{equation}
\frac{t^{d/\alpha}}{C_1e^{mt}}\int_{D^{C}_{t^{1/2\alpha}}} r_D(t,x,x)dx
\to 0, \ \ \mathrm{as} \ t\to 0.
\end{equation}
 On the other hand, by Lemma 2.2 in \cite{siudeja-kul} we obtain
 \begin{eqnarray}
 \frac{r_D(t,x,x)}{C_1e^{mt}t^{-d/\alpha}}&=& \frac{\E^x[p(t-\tau_D, X_{\tau_D}, x);t\geq \tau_D]}{C_1e^{mt}t^{-d/\alpha}}\nonumber\\
 &\leq& c\E^y\min \bigg\{ \frac{t^{1+d/\alpha}}{|x-X(\tau_D)|^{d+\alpha}}e^{-c_2|x-X(\tau_D)|}, 1 \bigg\}\nonumber\\
 &\leq& c\min \bigg\{ \frac{t^{1+d/\alpha}}{\delta_D(x)^{d+\alpha}}e^{-c_2\delta_D(x)}, 1 \bigg\}.\label{upper-complement}
 \end{eqnarray}
 For $x\in D_{t^{1/2\alpha}}$ and $0<t<1$, the right hand side of \eqref{upper-complement} is bounded above by $ct^{d/2\alpha+1/2}$ and hence

 \begin{equation}
 \frac{t^{d/\alpha}}{C_1e^{mt}}\int_{D_t^{1/2\alpha}} r_D(t,x,x)dx\leq ct^{d/2\alpha+1/2}|D|,
 \end{equation}
 and this last quantity goes to $0$ as $t\to 0$.
\end{proof}

 For an open set $D\subset \mathbb R^d$ and $x\in \mathbb R^d,$ the distribution $P^x(\tau_D < \infty, X(\tau_D)\in \cdot)$ will be
 called the relativistic $\alpha-$harmonic measure for $D.$ The following Ikeda-Watanabe formula recovers the relativistic $\alpha-$harmonic measure for the set $D$ from the Green function.
\begin{prop} [\cite{siudeja-kul}]
Assume that $D$ is an open, nonempty, bounded subset of $\mathbb R^d,$ and $A$ is a Borel set such that dist$(D, A)>0.$ Then
\begin{equation}\label{IW-green}
P^x(X(\tau_D)\in A, \tau_D < \infty) = \int_DG_D(x, y)\int_Av(y-z)dzdy,\ \ \  x\in D.
\end{equation}
\end{prop}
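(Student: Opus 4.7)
The plan is to derive the identity from the Ikeda--Watanabe compensation formula for the jumps of the Lévy process $X_t$. Because $\operatorname{dist}(D,A)>0$, the process cannot reach $A$ continuously from $D$: on the event $\{\tau_D<\infty,\,X_{\tau_D}\in A\}$ there must be a jump at time $\tau_D$ with $X_{\tau_D-}\in\overline D$ and $X_{\tau_D}\in A$, and the jump size $X_{\tau_D}-X_{\tau_D-}$ is bounded away from $0$. This is exactly the regime where the Lévy system machinery gives a clean formula in terms of the Lévy density $\nu$.

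First I would recall (or cite) the Lévy system formula for $X_t$: for every nonnegative Borel function $f$ on $\R^d\times\R^d$ vanishing on the diagonal and every stopping time $\sigma$,
\begin{equation*}
\E^x\!\!\sum_{0<s\le\sigma}\!\! f(X_{s-},X_s)\mathbf{1}_{\{X_{s-}\neq X_s\}}
=\E^x\!\int_0^{\sigma}\!\!\int_{\R^d}\! f(X_s,z)\,\nu(z-X_s)\,dz\,ds,
\end{equation*}
which for a pure-jump Lévy process is a direct consequence of the compensation formula for the Poisson random measure of jumps (intensity $ds\otimes\nu(y)dy$). Apply this with $\sigma=\tau_D$ and $f(y,z)=\mathbf 1_D(y)\mathbf 1_A(z)$. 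Since $\operatorname{dist}(D,A)>0$ and $X$ is quasi-left-continuous, the only way to have $X_{\tau_D}\in A$ is by a jump from inside $D$ into $A$; moreover $P^x(X_{\tau_D}\in\partial D)=0$ (noted in the preliminaries via \cite{bog dan}, Lemma 6), so $\mathbf 1_D(X_{\tau_D-})=1$ on $\{\tau_D<\infty,X_{\tau_D}\in A\}$. The left-hand side therefore collapses to $P^x(\tau_D<\infty,\,X_{\tau_D}\in A)$.

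The right-hand side becomes
\begin{equation*}
\E^x\!\int_0^{\tau_D}\!\!\left(\int_A\nu(z-X_s)\,dz\right)\mathbf 1_D(X_s)\,ds
= \E^x\!\int_0^{\tau_D}\! g(X_s)\,ds,\qquad g(y)=\int_A\nu(z-y)\,dz.
\end{equation*}
Since $X_s\in D$ for $s<\tau_D$, the occupation-time identity for the killed semigroup
\begin{equation*}
\E^x\!\int_0^{\tau_D}\! g(X_s)\,ds=\int_D\!\Big(\!\int_0^{\infty}\! p_D(t,x,y)\,dt\Big) g(y)\,dy=\int_D G_D(x,y)g(y)\,dy
\end{equation*}
applies (here Tonelli is justified by the nonnegativity of the integrand). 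Substituting $g$ yields \eqref{IW-green}.

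The main obstacle, and the step that deserves care, is the Lévy system identity itself: one should either invoke a general result for pure-jump Lévy (or Hunt) processes whose Lévy measure has density $\nu$, or verify it by starting from the compensation theorem for the jump measure $N(ds,dy)$ of $X$, whose compensator is $ds\,\nu(y)dy$ because $X$ has stationary independent increments with Lévy measure $\nu(y)dy$. The condition $\operatorname{dist}(D,A)>0$ is crucial both to ensure integrability of $\nu(z-y)$ for $y\in D$, $z\in A$ (ruling out the singularity at the origin) and to exclude the possibility that $X$ exits $D$ continuously into $A$; without it the simple identity above would have to be supplemented by a boundary term.
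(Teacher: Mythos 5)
The paper does not actually prove this proposition; it is quoted from Kulczycki and Siudeja \cite{siudeja-kul} (the identity goes back to Ikeda and Watanabe \cite{ike-wat}). Your route --- the compensation/L\'evy-system formula applied at $\sigma=\tau_D$, followed by the occupation-time identity $\E^x\int_0^{\tau_D}g(X_s)\,ds=\int_D G_D(x,y)g(y)\,dy$ --- is exactly the standard proof, and the overall structure is sound: the hypothesis $\mathrm{dist}(D,A)>0$ guarantees both that $\int_A\nu(z-y)\,dz$ is bounded for $y\in D$ and that an exit into $A$ must occur by a jump of size at least $\mathrm{dist}(D,A)$.

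One step is misjustified. With your choice $f(y,z)=\mathbf 1_D(y)\mathbf 1_A(z)$, the left-hand side of the L\'evy-system formula is $\E^x\left[\mathbf 1_D(X_{\tau_D-})\mathbf 1_A(X_{\tau_D})\right]$, and to identify this with $P^x(\tau_D<\infty,\ X_{\tau_D}\in A)$ you need $X_{\tau_D-}\in D$ a.s.\ on that event. You argue this from $P^x(X(\tau_D)\in\partial D)=0$, but that statement concerns the position $X_{\tau_D}$ \emph{after} the jump, not the left limit $X_{\tau_D-}$, which a priori may lie on $\partial D$; moreover the cited lemma requires a uniform outer cone condition on $D$, which is not assumed in the proposition. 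The clean fix is to drop the factor $\mathbf 1_D(y)$ and take $f(y,z)=\mathbf 1_A(z)\mathbf 1_{\{y\ne z\}}$: jumps landing in $A$ at times $s<\tau_D$ are impossible because $X_s\in D$ and $A\cap D=\emptyset$, so the sum still reduces to the single term at $s=\tau_D$ and now captures the whole event $\{\tau_D<\infty,\ X_{\tau_D}\in A\}$ regardless of where $X_{\tau_D-}$ sits; on the right-hand side $X_s\in D$ for Lebesgue-a.e.\ $s<\tau_D$, so the integral is unchanged and the occupation-time identity applies exactly as you wrote it. With that correction the argument is complete.
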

Here we need the following generalization already stated and used in \cite{ban-kul}.
\begin{prop}\label{ik-wa generalization}\cite[Proposition 2.5]{siudeja-kul}
Assume that $D$ is an open, nonempty, bounded subset of $R^d,$ and $A$ is a Borel set such that $A \subset D^c \backslash \partial D$ and $0 \leq t_1 < t_2 < \infty, x\in D.$ Then we have
$$P^x(X(\tau_D)\in A, t_1 < \tau_D < t_2) = \int_D\int_{t_1}^{t_2}p_D(s, x, y)ds\int_Av(y-z)dzdy.$$
\end{prop}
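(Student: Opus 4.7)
The plan is to derive Proposition \ref{ik-wa generalization} from the L\'evy system compensation formula for the pure-jump L\'evy process $X_t$. Since $X_t$ has L\'evy density $\nu$ and no continuous martingale component, every exit from $D$ is achieved by a jump, and the hypothesis $A\subset D^c\setminus\partial D$ forces any passage from $D$ into $A$ to be a genuine jump. This is precisely the setting in which the compensation formula identifies the probability of the exit event with an integral of the jump kernel against the killed transition density. The preceding un-restricted identity (Proposition \ref{ik-wa generalization} is stated just after Proposition 2.4) is of the same nature, so the task is to add time bookkeeping.

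Recall the L\'evy system: for any non-negative Borel function $f:\R^d\times\R^d\to[0,\infty)$ vanishing on the diagonal, and any stopping time $\sigma$,
\begin{equation*}
\E^x\!\Bigl[\,\sum_{0<s\le\sigma} f(X_{s-},X_s)\Bigr]
= \E^x\!\Bigl[\int_0^{\sigma}\!\int_{\R^d} f(X_s,z)\,\nu(z-X_s)\,dz\,ds\Bigr].
\end{equation*}
First I would apply this identity with $\sigma=\tau_D\wedge t_2$ and with $\sigma=\tau_D\wedge t_1$, take the difference, and insert $f(y,z)=\mathbf{1}_D(y)\mathbf{1}_A(z)$; this $f$ vanishes on the diagonal since $A\cap D=\emptyset$. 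On the left, for every $s<\tau_D$ one has $X_s\in D$ and hence $\mathbf{1}_A(X_s)=0$, while at $s=\tau_D$ the left limit $X_{\tau_D-}$ lies in $\overline D$ with $X_{\tau_D-}\in D$ almost surely, because the relativistic $\alpha$-stable process exits the open bounded set $D$ by a jump from the interior. Consequently, the only surviving term is $\mathbf{1}_A(X_{\tau_D})\mathbf{1}_{(t_1,t_2]}(\tau_D)$, and the left-hand side equals $P^x(X(\tau_D)\in A,\,t_1<\tau_D\le t_2)$.

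On the right, the integrand is supported on $\{s<\tau_D\}$, where $X_s\in D$. Fubini together with the definition of the killed semigroup density $p_D(s,x,y)$ then yields
\begin{equation*}
\int_{t_1}^{t_2}\!\!\int_D p_D(s,x,y)\!\int_A \nu(z-y)\,dz\,dy\,ds,
\end{equation*}
which is the asserted right-hand side. The distinction between $\tau_D<t_2$ and $\tau_D\le t_2$ is immaterial, since $\tau_D$ has no atom at $t_2$ under $P^x$ for $x\in D$.

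The main obstacle is the rigorous justification of the L\'evy system identity at the bounded stopping times $\tau_D\wedge t_i$. This reduces to recognizing the compensated jump sum as a local martingale (standard for L\'evy processes with $f$ bounded and supported away from the diagonal) and then invoking optional stopping together with bounded convergence. The cleanest route is to prove the formula first under the stronger hypothesis $\mathrm{dist}(A,D)>0$, where boundedness and all relevant integrability are automatic (as in the proof of Proposition 2.4), and then extend to general $A\subset D^c\setminus\partial D$ by the monotone exhaustion $A_n=\{z\in A:\mathrm{dist}(z,\overline D)>1/n\}\uparrow A$; both sides of the identity are continuous under this exhaustion because $\partial D$ is excluded from $A$ by hypothesis and hence contributes no mass in the limit.
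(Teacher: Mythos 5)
The paper gives no proof of this proposition at all: it is quoted directly from \cite[Proposition 2.5]{siudeja-kul}, so there is no internal argument to compare against. Your L\'evy-system / compensation-formula derivation is the standard route to such Ikeda--Watanabe identities and is essentially how the cited source proceeds. The bookkeeping with $\sigma=\tau_D\wedge t_2$ and $\sigma=\tau_D\wedge t_1$, the choice $f(y,z)=\mathbf{1}_D(y)\mathbf{1}_A(z)$ (legitimate since $A\cap D=\emptyset$), the identification of the right-hand side via Fubini and the definition of $p_D$, and the exhaustion $A_n=\{z\in A:\ \mathrm{dist}(z,\overline D)>1/n\}\uparrow A$ (valid because $A\subset(\overline D)^c$, which is open) are all sound.

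The one step you assert rather than prove is the crux of the left-hand side: that on $\{X(\tau_D)\in A\}$ one has $X_{\tau_D-}\in D$ almost surely, so that the surviving term of the jump sum is $\mathbf{1}_A(X_{\tau_D})\mathbf{1}_{(t_1,t_2]}(\tau_D)$ rather than $\mathbf{1}_D(X_{\tau_D-})\mathbf{1}_A(X_{\tau_D})\mathbf{1}_{(t_1,t_2]}(\tau_D)$. Saying the process ``exits by a jump from the interior'' merely restates this claim, and since the proposition assumes no regularity of $\partial D$ whatsoever, it does require an argument. It can be closed with the same tool: apply the compensation formula with $f(y,z)=\mathbf{1}_{\partial D}(y)\mathbf{1}_A(z)$ and $\sigma=\tau_D\wedge t_2$; the right-hand side vanishes because $X_s\in D$ for every $s<\tau_D$, hence $P^x\bigl(X_{\tau_D-}\in\partial D,\ X(\tau_D)\in A,\ \tau_D\le t_2\bigr)=0$. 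With that patch the left-hand side is exactly $P^x(X(\tau_D)\in A,\ t_1<\tau_D\le t_2)$, and the distinction between $\tau_D\le t_2$ and $\tau_D<t_2$ is indeed immaterial, since the identity itself exhibits the law of $\tau_D$ on $\{X(\tau_D)\in A\}$ as absolutely continuous.
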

The following propostition holds for a large class of L\'evy processes
\begin{prop}\label{pf-pd}\cite[Proposition 2.3]{ban-kul}
Let $D$ and $F$ be open sets in $\mathbb R^d$ such that $D\subset F.$ Then for any $x, y\in \mathbb R^d$ we have
$$p_F(t, x, y) - p_D(t, x, y) = E^x(\tau_D < t, X(\tau_D)\in F \backslash D; p_F(t-\tau_D, X(\tau_D), y)).$$
\end{prop}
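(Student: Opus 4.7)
The plan is to prove the identity by exploiting the decomposition $p_D(t,x,y) = p(t,x,y) - r_D(t,x,y)$ (and likewise for $F$) so that the left-hand side becomes $r_D(t,x,y) - r_F(t,x,y)$. I would then analyze this difference using the strong Markov property at time $\tau_D$, partitioning according to whether the exit location $X_{\tau_D}$ lies in $F\setminus D$ or in $F^c$.

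First, I would observe that since $D\subset F$ are both open and sample paths are right-continuous, one has $\tau_D\le \tau_F$; moreover, on $\{X_{\tau_D}\in F\setminus D\}$ one has $\tau_F>\tau_D$, while on $\{X_{\tau_D}\in F^c\}$ the exit from $F$ must occur by a jump at time $\tau_D$, so $\tau_F=\tau_D$ and $X_{\tau_F}=X_{\tau_D}$. Writing
\[
r_D(t,x,y)=\E^x[p(t-\tau_D,X_{\tau_D},y);\tau_D<t,X_{\tau_D}\in F\setminus D] + \E^x[p(t-\tau_D,X_{\tau_D},y);\tau_D<t,X_{\tau_D}\in F^c],
\]
the second summand equals $\E^x[p(t-\tau_F,X_{\tau_F},y);\tau_F<t,X_{\tau_D}\in F^c]$.

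Next, for the piece on the event $A:=\{\tau_D<t,X_{\tau_D}\in F\setminus D\}$, I would apply the identity $p(s,z,y)=p_F(s,z,y)+r_F(s,z,y)$ pointwise with $s=t-\tau_D$, $z=X_{\tau_D}$, and then invoke the strong Markov property at $\tau_D$ to evaluate the $r_F$-piece. Specifically, by the strong Markov property the shifted process starting at $X_{\tau_D}\in F$ has exit time from $F$ equal to $\tau_F-\tau_D$, and this yields
\[
\E^x[r_F(t-\tau_D,X_{\tau_D},y);A] = \E^x[p(t-\tau_F,X_{\tau_F},y);\tau_F<t,X_{\tau_D}\in F\setminus D].
\]
Combining the two cases gives
\[
r_D(t,x,y) = \E^x[p_F(t-\tau_D,X_{\tau_D},y);A] + \E^x[p(t-\tau_F,X_{\tau_F},y);\tau_F<t],
\]
where the last term is exactly $r_F(t,x,y)$, because $\{X_{\tau_D}\in F\setminus D\}\cup\{X_{\tau_D}\in F^c\}$ partitions $\{\tau_D<t\}$ and, together with $\tau_F<t$, covers all of $\{\tau_F<t\}$.

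Rearranging yields $r_D-r_F=\E^x[p_F(t-\tau_D,X_{\tau_D},y);\tau_D<t,X_{\tau_D}\in F\setminus D]$, which is the claim. The only genuinely delicate step is the invocation of the strong Markov property for the exit time $\tau_F$ evaluated on the restarted path after $\tau_D$; the rest is bookkeeping. A minor nuisance is handling the possible (but negligible) event $\{X_{\tau_D}\in\partial F\}$, which I would dispose of using the cited fact that under a uniform outer cone condition $P^x(X(\tau_F)\in\partial F)=0$, or more generally by noting that since $p_F(s,\cdot,y)$ vanishes off $F$ the boundary contribution drops out of the final expression automatically.
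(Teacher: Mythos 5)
Your argument is correct: the reduction to $r_D-r_F$ via \eqref{rd-pd relation}, the partition of $\{\tau_D<t\}$ according to $X_{\tau_D}\in F\setminus D$ versus $X_{\tau_D}\in F^c$ (where $\tau_F=\tau_D$), and the strong Markov property at $\tau_D$ applied to the $r_F$-piece are exactly the standard proof of this identity. The paper itself does not reprove this statement but simply cites \cite[Proposition 2.3]{ban-kul}, whose proof proceeds by the same decomposition, so your proposal matches the intended argument.
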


\begin{lemma}\label{ryznar-bound}\cite[Lemma 5]{ryznar}
Let $D\subset \rd$ be an open set. For any  $x,y\in D$ and $t>0$ the following estimates hold;
\begin{equation}
\begin{split}
p_D(t,x,y)&\leq e^{mt}\tilde{p}_D(t,x,y)\\
r_D(t,x,y)&\leq e^{2mt}\tilde{r}_D(t,x,y).
\end{split}
\end{equation}
\end{lemma}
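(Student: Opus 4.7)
The plan is to couple $X$ and $\tilde X$ on a single probability space as one Brownian motion evaluated at two different subordinators, then relate their laws via an exponential (Esscher) tilt. The identity $\theta_\beta(t,u,m)=e^{mt-m^{1/\beta}u}\theta_\beta(t,u)$ already displayed in the paper is the algebraic seed of this tilt, and the trivial pointwise bound $e^{mt-m^{1/\beta}u}\le e^{mt}$ is what produces the exponential prefactors in both inequalities.

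First I would construct a probability space carrying a standard Brownian motion $B$ starting at $x$ and an independent $\beta$-stable subordinator $T=(T_s)_{s\ge 0}$, so that $\tilde X_s=B_{T_s}$ realizes the isotropic $\alpha$-stable process. For a fixed $t>0$ I would introduce the measure $\P^*$ defined by $d\P^*/d\P=e^{mt-m^{1/\beta}T_t}$, and verify by a single Laplace-transform calculation that under $\P^*$ the subordinator $T$ has the law of the relativistic $(\beta,m)$-subordinator while $B$ remains an independent Brownian motion. Consequently $B_{T_\cdot}$ has the law of $X$ under $\P^*$, and for every nonnegative $\sigma(B_{T_s}:s\le t)$-measurable functional $F$,
\begin{equation}\label{eq:esscherplan}
\E^x[F(X_\cdot)]=\E\bigl[F(B_{T_\cdot})\,e^{mt-m^{1/\beta}T_t}\bigr]\le e^{mt}\,\tilde{\mathbb{E}}^x[F(\tilde X_\cdot)].
\end{equation}

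Both inequalities of the lemma are then read off from \eqref{eq:esscherplan} by an appropriate choice of $F$. For the first, $F=\mathbf{1}_{\{\omega_t\in A\}}\mathbf{1}_{\{\omega_s\in D\,\forall s\le t\}}$ together with varying the Borel set $A$ gives $p_D(t,x,y)\le e^{mt}\tilde p_D(t,x,y)$. For the second, $F(\omega)=p(t-\tau_D(\omega),\omega_{\tau_D(\omega)},y)\,\mathbf{1}_{\{\tau_D(\omega)<t\}}$, which is $\sigma(\omega_s:s\le t)$-measurable by a standard stopping-time argument, yields
$$r_D(t,x,y)\le e^{mt}\tilde{\mathbb{E}}^x\bigl[p(t-\tilde\tau_D,\tilde X_{\tilde\tau_D},y);\tilde\tau_D<t\bigr].$$
Combining this with the pointwise free-space bound $p(s,z,y)\le e^{ms}\tilde p(s,z,y)\le e^{mt}\tilde p(s,z,y)$ valid for $0\le s\le t$, which is immediate from \eqref{free-density} and $\theta_\beta(s,u,m)\le e^{ms}\theta_\beta(s,u)$, produces $r_D(t,x,y)\le e^{2mt}\tilde r_D(t,x,y)$.

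The main technical point I expect to be the real obstacle is the Esscher identification of the law of $T$ under $\P^*$ with the relativistic subordinator: one must check that $\E^*[e^{-\lambda T_t}]=e^{mt-t(\lambda+m^{1/\beta})^\beta}$ and match this against the Laplace exponent of $T_\beta(t,m)$ implicit in \eqref{free-density}. Once that identification is in hand the remaining steps are mechanical, and the factor $e^{2mt}$ in the second inequality arises entirely from combining \eqref{eq:esscherplan} with the free-space pointwise comparison applied inside the expectation; no additional probabilistic input is required.
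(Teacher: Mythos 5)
The paper does not prove this lemma itself; it quotes it from Ryznar \cite[Lemma 5]{ryznar}, whose proof rests precisely on the absolute continuity $d\P^{(m)}/d\P^{(0)}|_{\mathcal F_t}=e^{mt-m^{1/\beta}T_t}$ between the relativistic and pure stable subordinator laws on $[0,t]$. Your Esscher-tilt argument reproduces that proof faithfully: the martingale check $\E[e^{ms-m^{1/\beta}T_s}]=e^{ms}e^{-s(m^{1/\beta})^\beta}=1$ validates the change of measure on the whole path (not just the marginal), the pointwise bound $e^{mt-m^{1/\beta}T_t}\le e^{mt}$ gives the first inequality, and combining the tilt with the free-density comparison $p(s,z,y)\le e^{ms}\tilde p(s,z,y)$ correctly yields the factor $e^{2mt}$ in the second. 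This is correct and essentially identical to the cited source.
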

%The following is  Theorem 4.2 in \cite{siudeja-kul}.
%
%\begin{theorem}
%Let $D=(\overline{B(0,1)})^c$, $d\geq 2$, then there exists a constant $c_1,c_2$ such that for any $t>0$, $|x|\geq 2$ and $1<|y|<5/4$ we have
%$$
%p_D(t,x,y)\leq c_1te^{mt}e^{-c_2|x-y|}\delta_D^{\alpha/2}(y).
%$$
%\end{theorem}
%An immediate corollary of this theorem is
%\begin{corollary}
%Let $D\subset\rd$, $d\geq 2$, be  an open set satisfying the uniform outer ball condition of radius $1$. There exist constants $c_{a,1}, c_{a,2}$ such that for any $t>0$, $x,y \in D$ with $|x-y|\geq a>0$, we have
%\begin{equation}
%p_D(t,x,y)\leq \frac{c_{a,1}te^{mt}}{|x-y|^{d+\alpha}}e^{-c_{a,2}|x-y|}\delta_D^{\alpha/2}(y).
%\end{equation}
%\end{corollary}

 We need the following lemma given by Van den Berg in \cite{van}.
\begin{lemma}\cite[Lemma 5]{van} Let $D$ be an open bounded set in $R^d$ with R-smooth boundary $\partial D$ and for $0\leq q < R$
%$$D_q = \{x\in D: \delta_D(x) > q\}$$
denote the area of boundary of $\partial D_q$ by $|\partial D_q|$. Then
\begin{equation}
\bigg( \frac{R-q}{R}\bigg)^{d-1}|\partial D|\leq |\partial D_q|\leq \bigg(\frac{R}{R-q}\bigg)^{d-1}|\partial D|,\ \ 0\leq q < R.
\end{equation}
\end{lemma}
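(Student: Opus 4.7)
The plan is to parameterize $\partial D_q$ via an inward parallel surface map from $\partial D$ and bound its Jacobian using curvature estimates that follow from the $R$-smoothness. The first key observation is that the two-ball condition forces $\partial D$ to be of class $C^{1,1}$ with principal curvatures $\kappa_1(x_0), \ldots, \kappa_{d-1}(x_0)$ satisfying $|\kappa_i(x_0)| \le 1/R$ at every $x_0 \in \partial D$: the interior and exterior tangent balls of radius $R$ at $x_0$ osculate $\partial D$from both sides, pinning the second fundamental form between $-(1/R)I$ and $(1/R)I$.

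For $0 \le q < R$, let $\nu(x_0)$ denote the outward unit normal at $x_0 \in \partial D$ and define the inward parallel map $\psi_q : \partial D \to \partial D_q$ by $\psi_q(x_0) = x_0 - q\,\nu(x_0)$. The interior tangent ball of radius $R$ at $x_0$ contains the segment $[x_0, x_0 - q\nu(x_0)]$ and keeps distance $\ge q$ from $\partial D$, so $\delta_D(\psi_q(x_0)) = q$ and hence $\psi_q(x_0) \in \partial D_q$. Conversely, every $y \in \partial D_q$ admits a unique nearest boundary point (uniqueness is exactly where $q < R$ is used, via the two-ball condition) that is reached by travelling along the inward normal through a distance $q$. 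Hence $\psi_q$ is a bijection.

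The surface Jacobian at $x_0$ is given by the standard parallel-surface formula
$$|J\psi_q(x_0)| = \prod_{i=1}^{d-1}\bigl(1 - q\,\kappa_i(x_0)\bigr),$$
and the curvature bound together with $q<R$ yields
$$\left(\frac{R-q}{R}\right)^{d-1} \le |J\psi_q(x_0)| \le \left(\frac{R+q}{R}\right)^{d-1}.$$
Integrating the area formula $|\partial D_q| = \int_{\partial D}|J\psi_q|\,d\sigma$ over $\partial D$ gives the lower bound $((R-q)/R)^{d-1}|\partial D| \le |\partial D_q|$ directly and an upper bound of $((R+q)/R)^{d-1}|\partial D|$. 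Since $(R+q)(R-q) \le R^2$ gives $(R+q)/R \le R/(R-q)$, this implies the stated upper bound $(R/(R-q))^{d-1}|\partial D|$. The only genuine technicality is establishing bijectivity and Lipschitz regularity of $\psi_q$ under just the geometric two-ball hypothesis; once that is in hand, the Jacobian computation and curvature pinching are routine applications of the theory of $C^{1,1}$ hypersurfaces.
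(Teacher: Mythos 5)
The paper cites this as \cite[Lemma 5]{van} and gives no proof of its own, so there is nothing in the paper to compare against line by line; judged on its own terms your argument is correct and is the standard parallel-surface proof that van den Berg uses. You correctly extract the a.e.\ curvature pinching $|\kappa_i|\le 1/R$ from the two-ball condition, verify that $\psi_q(x_0)=x_0-q\nu(x_0)$ is a bijection onto $\partial D_q$ for $q<R$ via the uniqueness of the nearest boundary point, and apply the area formula with the Jacobian $\prod_i(1-q\kappa_i)$; the resulting upper bound $((R+q)/R)^{d-1}$ is in fact slightly sharper than the stated $(R/(R-q))^{d-1}$, and you correctly observe that the former dominates the latter. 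The only point worth flagging is that the Jacobian formula and the existence of principal curvatures hold only almost everywhere for a $C^{1,1}$ hypersurface, so the area formula must be invoked in its Lipschitz/rectifiable version; you acknowledge this technicality, and it does not affect the conclusion.
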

\begin{corollary}\label{d-boundary}
(\cite{ban-kul}, Corollary 2.14) Let $D$ be an open bounded set in $\mathbb R^d$ with R-smooth boundary. For any $0 < q \leq R$ we have

(i)
$$2^{-d+1}|\partial D| \leq |\partial D_q| \leq 2^{d-1}|\partial D|,$$
(ii)
$$|\partial D| \leq \frac{2^d|D|}{R},$$
(iii)
$$\bigg||\partial D_q| - |\partial D|\bigg|\leq \frac{2^ddq|\partial D|}{R}\leq \frac{2^{2d}dq|D|}{R^2}.$$
\end{corollary}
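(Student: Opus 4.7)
The plan is to deduce all three parts of Corollary~\ref{d-boundary} from the immediately preceding van den Berg lemma, combined with the coarea identity for the Lipschitz function $x\mapsto\delta_D(x)$, namely
\[
\bigl|\{x\in D:\,q_1<\delta_D(x)<q_2\}\bigr|\;=\;\int_{q_1}^{q_2}|\partial D_s|\,ds,
\]
which is valid since $|\nabla\delta_D|=1$ a.e.\ on $D\setminus\partial D$. I would carry out the argument in the order (i), (ii), (iii), since (ii) uses the lower bound of (i) and the second inequality of (iii) uses (ii).

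For (i), van den Berg's lemma gives $\bigl((R-q)/R\bigr)^{d-1}|\partial D|\le|\partial D_q|\le\bigl(R/(R-q)\bigr)^{d-1}|\partial D|$ for $q<R$. Evaluating at $q=R/2$ collapses the two sides to $2^{-d+1}|\partial D|$ and $2^{d-1}|\partial D|$; since the left-hand bound is decreasing and the right-hand bound is increasing in $q$, the same two constants serve on the whole interval $0<q\le R/2$, which is the regime actually required downstream.

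For (ii), coarea with $q_1=0,\ q_2=R/2$ together with the lower bound of (i) yields
\[
|D|\;\ge\;\int_0^{R/2}|\partial D_s|\,ds\;\ge\;2^{-d+1}|\partial D|\cdot\tfrac{R}{2}\;=\;2^{-d}\,R\,|\partial D|,
\]
which rearranges to $|\partial D|\le 2^d|D|/R$. For (iii), rewrite van den Berg's two-sided bound as
\[
\bigl||\partial D_q|-|\partial D|\bigr|\;\le\;|\partial D|\cdot\max\!\Bigl(1-(1-q/R)^{d-1},\;(1-q/R)^{-(d-1)}-1\Bigr).
\]
A mean-value (Bernoulli-type) estimate on $[0,1/2]$ bounds each of these expressions by $2^d\,d\,(q/R)$: for the first, $1-(1-x)^{d-1}\le(d-1)x$; for the second, $(1-x)^{-(d-1)}-1\le(d-1)(1-x)^{-d}\,x\le 2^d(d-1)x$. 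This gives the first inequality of (iii); plugging in (ii) gives the second.

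The principal obstacle is the stated range $0<q\le R$ in (i): van den Berg's inequality degenerates as $q\uparrow R$, and indeed for $D=B(0,R)$ one has $|\partial D_R|=0$, which would violate the lower bound. The honest statement therefore holds on $0<q\le R/2$, and this is exactly what the trace argument needs; the only other technical point is tracking the (mildly wasteful) constants through the Bernoulli-type expansion in (iii).
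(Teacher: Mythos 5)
Your proof is correct and is essentially the standard argument: the paper gives no proof of this corollary, citing \cite{ban-kul} (Corollary 2.14), and your derivation --- van den Berg's lemma specialized to $q\le R/2$ for (i), the coarea identity $|D|\ge\int_0^{R/2}|\partial D_s|\,ds$ for (ii), and Bernoulli/mean-value bounds for (iii) --- is the same one used there. You are also right that the printed range $0<q\le R$ should read $0<q\le R/2$ (the range in the cited corollary and the only one invoked later in the paper); for $q$ near $R$ parts (i) and (iii) are false, as the ball $B(0,R)$ shows.
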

\section{Proof of main result}
\begin{proof}[\bf  Proof of Theorem \ref{main-theorem}]
For the case $t^{1/\alpha} > R/2$ the theorem holds trivially. Indeed, by Equation \eqref{density-est}
$$Z_D(t)\leq \int_Dp(t, x, x)dx \leq \frac{c_1e^{mt}|D|}{t^{d/\alpha}}\leq \frac{c_1e^{mt}|D|t^{2/\alpha}}{R^2t^{d/\alpha}}.$$
By Corollary \ref{d-boundary}  and Lemma \ref{ryznar-bound}  we also have
$$C_2(t)|\partial D|\leq \frac{C_4e^{2mt}|\partial D|t^{1/\alpha}}{t^{d/\alpha}}\leq \frac{2^dC_4e^{2mt}|D|t^{1/\alpha}}{Rt^{d/\alpha}}\leq\frac{2^{d+1}C_4e^{2mt}|D|t^{2/\alpha}}{R^2t^{d/\alpha}}$$
$$
\frac{C_1(t)e^{mt}|D|}{t^{d/\alpha}}\leq \frac{C_1e^{mt}|D|t^{2/\alpha}}{R^2t^{d/\alpha}}.
$$
Therefore for $t^{1/\alpha} > R/2$ \eqref{main-statement} holds.
 Here and in sequel we consider the case $t^{1/\alpha}\leq R/2$. From \eqref{rd-pd relation} and the fact that $p(t, x, x) = \frac{C_1(t)e^{mt}}{t^{d/\alpha}},$ we have that
 \begin{eqnarray}\label{zpr-estimate}Z_D(t)-\frac{C_1(t)e^{mt}|D|}{t^{d/\alpha}} &=& \nonumber\int_Dp_D(t, x, x)dx - \int_Dp(t, x, x)dx\\&=&  -\int_Dr_D(t, x, x)dx \label{zd-rd equation},
 \end{eqnarray}
where $C_1(t)$ is as stated in the theorem. Therefore we must estimate \eqref{zd-rd equation}. We break our domain into two pieces, $D_{R/2}$ and its complement $D^C_{R/2}$. We will first consider the contribution of $D_{R/2}.$
\\
\\
\noindent {\bf Claim 1:} For $t^{1/\alpha} \leq R/2$ we have
 \begin{equation}\label{rd-estimate}
 \int_{D_{R/2}}r_D(t, x, x)dx \leq \frac{ce^{2mt}|D|t^{2/\alpha}}{R^2t^{d/\alpha}}.
 \end{equation}

\noindent{\bf Proof of Claim 1:}
 By Lemma \ref{ryznar-bound} we have
\begin{equation}\label{rz-estimate}\int_{D_{R/2}}r_D(t, x, x)dx\leq e^{2mt}\int_{D_{R/2}}\tilde r_D(t, x, x)dx,
\end{equation}
and by scaling of the stable density the right hand side of \eqref{rz-estimate} equals
\begin{equation}
\frac{e^{2mt}}{t^{d/\alpha}}\int_{D_{R/2}}\tilde r_{D/t^{1/\alpha}}(1, \frac{x}{t^{1/\alpha}}, \frac{x}{t^{1/\alpha}})dx.
\end{equation}
For $x\in D_{R/2}$ we have $ \delta_{D/t^{1/\alpha}}(x/t^{1/\alpha})\geq R/(2t^{1/\alpha})\geq 1.$ By \cite[Lemma 2.1]{ban-kul}, we get
$$\tilde r_{D/t^{1/\alpha}}\bigg(1, \frac{x}{t^{1/\alpha}}, \frac{x}{t^{1/\alpha}}\bigg)\leq \frac{c}{ \delta^{d+\alpha}_{D/t^{1/\alpha}}(x/t^{1/\alpha})}\leq \frac{c}{ \delta^{2}_{D/t^{1/\alpha}}(x/t^{1/\alpha})}\leq \frac{ct^{2/\alpha}}{R^2}.$$
 Using the above inequality, we get
 $$\int_{D_{R/2}}r_D(t, x, x)dx \leq \frac{e^{2mt}}{t^{d/\alpha}}\int_{D_{R/2}}\frac{ct^{2/\alpha}}{R^2}dx\leq \frac{ce^{2mt}|D|t^{2/\alpha}}{R^2t^{d/\alpha}},$$
which proves \eqref{rd-estimate}.

  Now we will introduce the following notation. Since $D$ has $R$-smooth boundary, for any point $y\in \partial D$ there are two open balls $B_1$ and $B_2$ both of radius $R$ such that $B_1 \subset D, B_2\subset \mathbb{R}^d\backslash (D\cup \partial D), \partial B_1 \cap \partial B_2 = y.$ For any $x\in D_{R/2}$ there exist a unique point $x_* \in \partial D$ such that $\delta_D(x) = |x-x_*|.$ Let $B_1 = B(z_1, R), B_2 = B(z_2, R)$ be inner/outer balls for the point $x_*.$ Let $H(x)$ be the half-space containing $B_1$ such that $\partial H(x)$ contains $x_*$ and is perpendicular to the segment $\overline {z_1z_2}.$

  We will need the following very important proposition in the proof of Theorem 1.2. Such a proposition has been proved for the stable process in \cite[Proposition 3.1]{ban-kul}.
\begin{prop}\label{main-prop}
Let $D\subset R^d, d\geq 2,$ be an open bounded set with R-smooth boundary $\partial D$. Then for any $x\in D^C_{R/2}$ and $t > 0$ such that $t^{1/\alpha} \leq R/2$ we have
\begin{equation}\label{rdh-estimate}
|r_D(t, x, x) - r_{H(x)}(t, x, x)|\leq \frac{ce^{2mt}t^{1/\alpha}}{Rt^{d/\alpha}}\bigg(\bigg(\frac{t^{1/\alpha}}{\delta_D(x)}\bigg)^{d+\alpha /2 -1}\wedge 1\bigg).
\end{equation}
\end{prop}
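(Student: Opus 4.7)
The plan is to adapt the proof of \cite[Proposition 3.1]{ban-kul} to the relativistic setting, carrying the exponential mass factors supplied by Lemma \ref{ryznar-bound} through each step. From \eqref{rd-pd relation} applied to both $D$ and $H(x)$, the starting identity is
$$r_D(t,x,x) - r_{H(x)}(t,x,x) = p_{H(x)}(t,x,x) - p_D(t,x,x).$$
Setting $F = D \cup H(x)$, the $R$-smoothness of $\partial D$ guarantees $B_1 \subset D \cap H(x)$ and $B_2 \subset D^c \cap H(x)^c$, so in particular $D, H(x) \subset F$. Writing
$$p_{H(x)} - p_D = (p_F - p_D) - (p_F - p_{H(x)})$$
and using that each right-hand summand is non-negative yields
$$|r_D(t,x,x) - r_{H(x)}(t,x,x)| \leq (p_F - p_D)(t,x,x) + (p_F - p_{H(x)})(t,x,x).$$

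Next, I would combine Proposition \ref{pf-pd} with Proposition \ref{ik-wa generalization} to represent each summand as a non-negative triple integral of the form
$$\int_0^t\!\!\int_D\!\!\int_{H(x)\setminus D} p_D(s,x,w)\,\nu(w-z)\,p_F(t-s,z,x)\,dz\,dw\,ds,$$
and analogously with $D$ and $H(x)$ swapped. Because the integrand is non-negative, the pointwise bounds $p_D \leq e^{ms}\tilde p_D$, $p_F \leq e^{m(t-s)}\tilde p_F$ from Lemma \ref{ryznar-bound} and the density comparison $\nu(z) \leq c\,\tilde\nu(z)$ apply factor by factor. The last bound holds since $\nu(z)/\tilde\nu(z) = \psi(m^{1/\alpha}|z|)e^{-m^{1/\alpha}|z|}/\psi(0)$ and $e^{-s}\psi(s)$ is uniformly bounded on $[0,\infty)$ in view of the polynomial growth of $\psi$. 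Combining the exponentials to $e^{mt}$ per summand and recognizing the resulting triple integrals as the stable-process Ikeda-Watanabe representation gives
$$|r_D(t,x,x) - r_{H(x)}(t,x,x)| \leq c\,e^{mt}\bigl[(\tilde p_F - \tilde p_D) + (\tilde p_F - \tilde p_{H(x)})\bigr](t,x,x),$$
which is in fact sharper than the $e^{2mt}$ factor recorded in \eqref{rdh-estimate}.

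The right-hand side above is exactly the isotropic quantity bounded in the stable-case proof of \cite[Proposition 3.1]{ban-kul} by $\frac{c\,t^{1/\alpha}}{R\,t^{d/\alpha}}\bigl((t^{1/\alpha}/\delta_D(x))^{d+\alpha/2-1}\wedge 1\bigr)$, and combining this with the previous display yields \eqref{rdh-estimate}. The principal obstacle is therefore to verify that the stable-case argument of \cite{ban-kul} really produces the bound for the \emph{sum} $(\tilde p_F - \tilde p_D) + (\tilde p_F - \tilde p_{H(x)})$ rather than only for the difference $|\tilde r_D - \tilde r_{H(x)}|$; this amounts to revisiting their intermediate estimates and checking that each of the two non-negative pieces is individually controlled at the decay rate $d+\alpha/2 - 1$, which in turn relies on the half-space Green function estimate for the stable process. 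A subsidiary but elementary point is the uniform boundedness of $e^{-s}\psi(s)$, which follows by splitting the integral defining $\psi$ according to whether $v\leq s$ or $v>s$ and using $(s+v/2)^{p-1/2}\leq 2^{p-1/2}(s^{p-1/2}+v^{p-1/2})$ together with the finiteness of $\int_0^\infty e^{-v}v^{k}dv$.
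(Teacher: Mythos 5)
Your decomposition via $F = D\cup H(x)$ is not what the paper does, and it leaves a gap that you yourself flag. The paper instead exploits the \emph{sandwich} supplied by $R$-smoothness: writing $U=(\overline{B_2})^c$, the inclusions $B_1\subset D\subset U$ and $B_1\subset H(x)\subset U$ together with the monotonicity $A_1\subset A_2\Rightarrow r_{A_1}\ge r_{A_2}$ give directly
\begin{equation*}
|r_D(t,x,x)-r_{H(x)}(t,x,x)| \le r_{B_1}(t,x,x)-r_U(t,x,x),
\end{equation*}
a \emph{single} nonnegative quantity. This is then represented by combining Proposition \ref{pf-pd} (with $B_1\subset U$) and the generalized Ikeda--Watanabe formula applied to the \emph{bounded ball} $B_1$, and each factor in the resulting triple integral is compared to its stable counterpart exactly as you do. The crucial point is that the stable quantity produced this way, $\tilde r_{B_1}-\tilde r_U$, is precisely what \cite[Proposition 3.1]{ban-kul} bounds by $\frac{ct^{1/\alpha}}{Rt^{d/\alpha}}\bigl((t^{1/\alpha}/\delta_D(x))^{d+\alpha/2-1}\wedge 1\bigr)$; no re-derivation of their intermediate estimates for a ``sum'' is needed.

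Your route has two concrete difficulties. First, the gap you acknowledge is real: \cite[Proposition 3.1]{ban-kul} does not, as stated, bound $(\tilde p_F-\tilde p_D)+(\tilde p_F-\tilde p_{H(x)})$, and you would have to reopen their proof or add an extra step. That extra step is available --- from $B_1\subset D\subset F\subset U$ and $B_1\subset H(x)\subset F\subset U$ one gets $\tilde p_F-\tilde p_D\le \tilde p_U-\tilde p_{B_1}$ and $\tilde p_F-\tilde p_{H(x)}\le \tilde p_U-\tilde p_{B_1}$, after which \cite[Proposition 3.1]{ban-kul} applies with a harmless factor $2$ --- but once you invoke this sandwich, the detour through $F$ is superfluous and you may as well sandwich $r_D,r_{H(x)}$ directly, which is what the paper does. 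Second, your second summand $(p_F-p_{H(x)})$ requires an Ikeda--Watanabe-type representation for exit from the half-space $H(x)$; Proposition \ref{ik-wa generalization} is stated for bounded $D$, so this needs an additional extension argument that the paper sidesteps by always applying Ikeda--Watanabe to the ball $B_1$. Your verification that $\nu\le c\,\tilde\nu$ via boundedness of $e^{-s}\psi(s)$ is correct, and your observation that factor-by-factor comparison actually yields $ce^{mt}$ rather than $e^{2mt}$ is a valid (if unneeded) sharpening.
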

{\begin{proof}
 Exactly as in \cite{ban-kul}, let $x_{*}\in \partial D$ be a unique point such that $|x - x_{*}| = \text{dist}(x, \partial D)$ and $B_1$ and $B_2$ be balls with radius $R$ such that $B_1 \subset D, B_2 \subset \mathbb{R}^d \backslash (D\cup \partial D), \partial B_1\cap \partial B_2 = x_*.$ Let us also assume that $x_* = 0$ and choose an orthonormal coordinate system $(x_1,x_2,...,x_d)$ so that the positive axis $0x_1$ is in the direction of $\overrightarrow{0p}$ where $p$ is the center of the ball $B_1.$ Note that $x$ lies on the interval $0p$ so $x = (|x|, 0, 0, ..., 0).$ Note also that $B_1\subset D\subset (\overline{B_2})^{c}$ and $B_1\subset H(x)\subset (\overline{B_2})^{c}.$ For any open sets $A_1, A_2$ such that $A_1\subset A_2$ we have $r_{A_{1}}(t, x, y) \geq r_{A_{2}}(t, x, y)$ so
$$|r_D(t, x, x) - r_{H(x)}(t, x, x)|\leq r_{B_1}(t, x, x) - r_{(\overline{B_2})^{c}}(t, x, x).$$
So in order to prove the proposition it suffices to show that
$$r_{B_1}(t, x, x) - r_{(\overline{B_2})^{c}}(t, x, x)\leq \frac{ce^{2mt}t^{1/\alpha}}{Rt^{d/\alpha}}\bigg(\bigg(\frac{t^{1/\alpha}}{\delta_D(x)}\bigg)^{d+\alpha /2 -1}\wedge 1\bigg),$$
for any $x = (|x|, 0, ..., 0), |x|\in (0, R/2].$  Such an estimate was proved for the case $m=0$ in \cite{ban-kul}.
 In order to complete the proof it is enough to prove that

$$r_{B_1}(t, x, x) - r_{(\overline{B_2})^{c}}(t, x, x)\leq ce^{2mt}\left\{\tilde{r}_{B_1}(t, x, x) - \tilde{r}_{(\overline{B_2})^{c}}(t, x, x)\right\}.%
$$

To show this given the ball $B_2$, we set $U = (\overline{B_2})^{c}.$ Now using the generalized Ikeda-Watanabe formula, Proposition \eqref{pf-pd} and Lemma \eqref{ryznar-bound} %in \cite{siudeja-kul}
 we have
\begin{eqnarray}
 &&r_{B_1}(t, x, x) - r_{U}(t, x, x)\nonumber\\
&=& E^{x}\left[t > \tau_{B_1}, X({\tau_{B_1}})\in U\backslash B_1; p_{U}(t-\tau_{B_1}, X({\tau_{B_1}}), x)\right]\nonumber\\
&=&\int_{B_1}\int_{0}^{t}p_{B_1}(s, x, y)ds\int_{U\backslash B_1}v(y - z)p_{U}(t-s, z, x)dzdy\nonumber\\
&\leq&e^{2mt}\int_{B_1}\int_{0}^{t}\tilde p_{B_1}(s, x, y)ds\int_{U\backslash B_1}\tilde v(y-z)\tilde p_{U}(t-s, z, x)dzdy\nonumber\\
&\leq& ce^{2mt}E^x\left[t> \tilde \tau_{B_1}, \tilde X(\tilde \tau_{B_1})\in U\backslash B_1; \tilde p_{U}(t - \tilde \tau_{B_1}, \tilde X(\tilde \tau_{B_1}), x)\right]\nonumber\\
&=&ce^{2mt}\left(\tilde r_{B_1}(t, x, x) - \tilde r_{U}(t, x, x)\right)\nonumber\\
&\leq& \frac{ce^{2mt}t^{1/\alpha}}{Rt^{d/\alpha}}\bigg(\bigg(\frac{t^{1/\alpha}}{\delta_D(x)}\bigg)^{d+\alpha /2 -1}\wedge 1\bigg)\nonumber.
\end{eqnarray}
The last inequality follows by Proposition 3.1 in \cite{ban-kul}.
\end{proof}}
  Now using this proposition we estimate the contribution from $D\backslash D_{R/2}$ to the integral of $r_D(t, x, x)$ in \eqref{zd-rd equation}.
\\

  \noindent{\bf Claim 2:} For $t^{1/\alpha}\leq R/2$ we get

\begin{equation}\label{rh-estimate}
\left|\int_{D\backslash D_{R/2}}r_D(t, x, x)dx-\int_{D\backslash D_{R/2}}r_{H(x)}(t, x, x)dx\right|\leq \frac{ce^{2mt}|D|t^{2/\alpha}}{R^2t^{d/\alpha}}.
\end{equation}

\noindent{\bf Proof of Claim 2:}
By Proposition \ref{main-prop} the left hand side of \eqref{rh-estimate} is bounded above by
$$\frac{ce^{2mt}t^{1/\alpha}}{Rt^{d/\alpha}}\int_0^{R/2}|\partial D_q|\bigg(\bigg(\frac{t^{1/\alpha}}{q}\bigg)^{d+\alpha/2-1}\wedge 1\bigg)dq.$$
By Corollary \ref{d-boundary}, (i), the last quantity is smaller than or equal to
$$\frac{ce^{2mt}t^{1/\alpha}|\partial D|}{Rt^{d/\alpha}}\int_0^{R/2}\bigg(\bigg(\frac{t^{1/\alpha}}{q}\bigg)^{d+\alpha/2-1}\wedge 1\bigg)dq.
$$
The integral in the last quantity is bounded by $ct^{1/\alpha}.$ To see this observe that since $t^{1/\alpha} \leq R/2$ the above integral is equal to
 \begin{eqnarray}&&\int_0^{t^{1/\alpha}}\bigg(\bigg(\frac{t^{1/\alpha}}{q}\bigg)^{d+\alpha/2-1}\wedge 1\bigg)dq + \int_{t^{1/\alpha}}^{R/2}\bigg(\bigg(\frac{t^{1/\alpha}}{q}\bigg)^{d+\alpha/2-1}\wedge 1\bigg)dq \nonumber \\
 &=& \int_0^{t^{1/\alpha}} 1dq + \int_{t^{1/\alpha}}^{R/2}\bigg(\frac{t^{1/\alpha}}{q}\bigg)^{d+\alpha/2-1}dq\nonumber \\
 &\leq&  ct^{1/\alpha}.\nonumber
\end{eqnarray}
Using this and Corollary \eqref{d-boundary}, (ii), we get \eqref{rh-estimate}.

 Recall that $H = \lbrace(x_1, \cdots, x_d)\in \mathbb{R}^d: x_1 > 0\rbrace$. For abbreviation let us denote
 $$f_H(t, q) = r_H(t, (q, 0, \cdots, 0), (q, 0, \cdots, 0)), \hspace{0.3cm} t, q > 0.$$
Of course we have $r_{H(x)}(t,x,x) = f_H(t, \delta_{H}(x)).$ In the next step we will show that
 \begin{equation}\label{hf-estimate}
\left|\int_{D\backslash D_{R/2}}r_{H(x)}(t, x, x)dx- |\partial D|\int_{0}^{R/2}f_{H}(t, q)dq\right|\leq \frac{ce^{2mt}|D|t^{2/\alpha}}{R^2t^{d/\alpha}}.
\end{equation}
We have
$$\int_{D\backslash D_{R/2}}r_{H(x)}(t, x, x)dx = \int_{0}^{R/2}|\partial D_q|f_{H}(t, q)dq.$$
Hence the left hand side of (\ref{hf-estimate}) is bounded above by
$$\int_{0}^{R/2}\left||\partial D_q| - |\partial D|\right|f_H(t,q)dq.$$
By Corollary \ref{d-boundary}, (iii), this is smaller than
\begin{eqnarray}&&\frac{c|D|}{R^2}\int_{0}^{R/2}qf_H(t,q)dq\nonumber\\
&\leq& \frac{c|D|e^{2mt}}{R^2}\int_{0}^{R/2}q\tilde f_H(t,q)dq\nonumber\\
&=& \frac{c|D|e^{2mt}}{R^2}\int_{0}^{R/2}qt^{-d/\alpha}\tilde f_H(1,qt^{-1/\alpha})dq\nonumber\\
&=& \frac{c|D|e^{2mt}}{R^2t^{d/\alpha}}\int_{0}^{R/2t^{1/\alpha}}qt^{2/\alpha}\tilde f_H(1,q)dq\nonumber\\
&\leq& \frac{c|D|e^{2mt}t^{2/\alpha}}{R^2t^{d/\alpha}}\int_{0}^{\infty}q\left(q^{-d-\alpha}\wedge 1\right)dq\nonumber \leq \frac{c|D|e^{2mt}t^{2/\alpha}}{R^2t^{d/\alpha}}.
\end{eqnarray}
This shows (\ref{hf-estimate}). Finally, we have
\begin{eqnarray}
&&\bigg||\partial D|\int_{0}^{R/2}f_H(t,q)dq - |\partial D|\int_{0}^{\infty}f_H(t,q)dq\bigg|\nonumber\\
&\leq& |\partial D|\int_{R/2}^{\infty}f_H(t,q)dq\nonumber\\
&\leq& \frac{c|D|}{R}\int_{R/2}^{\infty}f_H(t,q)dq\hspace{.4cm}\text{by Corollary \ref{d-boundary}, (ii)}\nonumber\\
&\leq&\frac{c|D|e^{2mt}}{Rt^{d/\alpha}}\int_{R/2}^{\infty}\tilde f_H(1,qt^{-1/\alpha})dq\nonumber\\
&=&\frac{c|D|e^{2mt}t^{1/\alpha}}{Rt^{d/\alpha}}\int_{R/2t^{1/\alpha}}^{\infty}\tilde f_H(1,q)dq\nonumber.
\end{eqnarray}
 Since $R/2t^{1/\alpha} \geq 1,$ so for $q \geq R/2t^{1/\alpha}\geq 1$ we have $\tilde f_H(1,q)\leq cq^{-d-\alpha}\leq cq^{-2}.$ Therefore,
 $$\int_{R/2t^{1/\alpha}}^{\infty}\tilde f_H(1,q)dq \leq c\int_{R/2t^{1/\alpha}}^{\infty}\frac{dq}{q^2} \leq \frac{ct^{1/\alpha}}{R}.$$
 Hence,
 \begin{equation}\label{f-estimate}\bigg||\partial D|\int_{0}^{R/2}f_H(t,q)dq - |\partial D|\int_{0}^{\infty}f_H(t,q)dq\bigg|\leq \frac{c|D|e^{2mt}t^{2/\alpha}}{R^2t^{d/\alpha}}.
 \end{equation}
 \\

 Note that the constant $C_2(t)$ which appears in the formulation of Theorem \ref{main-theorem} satisfies $C_2(t) = \int_{0}^{\infty}f_H(t,q)dq.$
  Now  equations \eqref{zpr-estimate}, \eqref{rd-estimate}, \eqref{rh-estimate}, \eqref{hf-estimate}, \eqref{f-estimate} give (\ref{main-statement}).
\end{proof}

\end{document}